\newtheorem{theorem}{Theorem}                            
\newtheorem{proposition}[theorem]{Proposition}            
\newtheorem{lemma}[theorem]{Lemma}
\begin{document}

\date{}
\title{Ramsey-type constructions for arrangements of segments}

\author{Jan Kyn\v cl\thanks{%
The author was partially supported by the Phenomena in High Dimensions project, in the framework of the European Community's ``Structuring the European Research Area'' program.}\\
}
     
\maketitle

\vskip -2cm

\begin{center}
{\small
Department of Applied Mathematics and\\
Institute for Theoretical Computer Science (ITI)\footnote{ITI is supported by project 1M0545 of the Ministry of  Education of the Czech Republic.}\\
Charles University, Faculty of Mathematics and Physics\\
Malostransk\'e n\'am.~25, 118~ 00 Prague, Czech Republic\\
\tt{kyncl@kam.mff.cuni.cz}
}
\end{center}

\bigskip

\begin{abstract}
Improving a result of K\'arolyi, Pach and T\'oth, we construct an arrangement of $n$ segments in the plane with at most $n^{\log{8} / \log{169}}$ pairwise crossing or pairwise disjoint segments. We use the recursive method based on flattenable arrangements which was established by Larman, Matou\v{s}ek, Pach and T\"or{\H{o}}csik. We also show that not every arrangement can be flattened, by constructing an intersection graph of segments which cannot be realized by an arrangement of segments crossing a common line. Moreover, we also construct an intersection graph of segments crossing a common line which cannot be realized by a flattenable arrangement. 
\end{abstract}


\section{Introduction}
An {\em arrangement of segments\/} is a finite set of compact straight-line segments in the plane in general position (i.e., no three endpoints are collinear). We study the following Ramsey-type problem~\cite{LMPT94_a_ramsey}: what is the largest number $r(k)$ such that there exists an arrangement of $r(k)$ segments with at most $k$ pairwise crossing and at most $k$ pairwise disjoint segments? 

Larman et al.~\cite{LMPT94_a_ramsey} proved that
$k^5 \ge r(k) \ge k^{\log{5}/\log{2}} > k^{2.3219}$. The upper bound has remained unchanged since then. K\'{a}rolyi et al.~\cite{KPT97_ramsey} improved the lower bound to $r(k) \ge k^{\log{27}/\log{4}} > k^{2.3774}$.

We improve the construction for the lower bound even further and prove the following theorem.

\begin{theorem}\label{veta_hlavni}
For infinitely many positive integers $k$ there exists an arrangement of $k^{\log{169}/\log{8}} > k^{2.4669}$ segments with at most $k$ pairwise crossing and at most $k$ pairwise disjoint segments.
\end{theorem}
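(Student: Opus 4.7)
The plan is to follow the recursive substitution framework of Larman, Matou\v{s}ek, Pach and T\"or{\H{o}}csik, as refined by K\'arolyi, Pach and T\'oth. One constructs a small \emph{flattenable} arrangement $\mathcal{A}_0$ of $N$ segments whose intersection graph contains at most $K$ pairwise crossing and at most $K$ pairwise disjoint segments, and then iteratively substitutes, at each level, a scaled-down flattened copy of $\mathcal{A}_0$ for every segment of the previous level. After $m$ levels this yields an arrangement of $N^m$ segments in which every clique of pairwise crossing or pairwise disjoint segments projects level-by-level to a clique of the same type in $\mathcal{A}_0$, so both numbers are bounded by $K^m$. Taking $N=169$, $K=8$, and $k=K^m$ gives $N^m = k^{\log 169/\log 8}$ segments for the infinite sequence $k = 8,64,512,\dots$, as required.

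The first step is to make the substitution lemma precise: an arrangement is flattenable if it can be continuously deformed, with the intersection graph kept fixed, so that all segments lie inside an arbitrarily thin horizontal strip. Such a flattened, scaled copy of $\mathcal{A}_0$ can be placed in a narrow $\varepsilon$-neighborhood of each segment of the outer arrangement without introducing or destroying crossings between different copies; this is exactly what powers the inductive step, and it immediately shows that both the maximum crossing clique and the maximum disjoint clique are multiplicative under the operation, as in~\cite{LMPT94_a_ramsey,KPT97_ramsey}.

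The main task---and the principal obstacle---is the explicit construction of a flattenable arrangement $\mathcal{A}_0$ of $169$ segments with crossing number and independence number both equal to $8$. My first attempt would be to obtain $\mathcal{A}_0$ as a substitution product of two or more smaller flattenable building blocks whose parameters multiply to $(169,8,8)$; since $169 = 13^2$ and $8 = 2^3$ do not factor compatibly over small integers, one is in general forced to mix several unbalanced building blocks, or to design $\mathcal{A}_0$ directly by placing the $169$ segments in carefully chosen positions. In either case, two things must be verified: (i) the clique numbers, by a case analysis exploiting as much symmetry of the construction as possible to rule out any nine pairwise crossing or pairwise disjoint segments, and (ii) flattenability, by exhibiting an explicit continuous motion of the endpoints towards a common horizontal line that preserves the intersection graph throughout.

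Finally, I would check that after iteration the $N^m$ segments can be placed in general position (no three endpoints collinear), which requires only a generic perturbation that preserves the intersection graph, and conclude by substituting $N=169$, $K=8$ into the recurrence, obtaining $r(k) \ge k^{\log 169/\log 8} > k^{2.4669}$ for every $k$ of the form $8^m$.
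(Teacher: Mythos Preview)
Your plan matches the paper's approach: recursive substitution of a flattenable base arrangement with parameters $(N,K)=(169,8)$, yielding $r(8^m)\ge 169^m$. You also correctly locate the real work in the construction of $\mathcal{A}_0$. The gap is that you stop short of the one idea that makes the construction go through, and your remark that $169=13^2$ and $8=2^3$ ``do not factor compatibly'' points in the wrong direction. They factor perfectly once you allow \emph{asymmetric} $(k,l)$-building blocks: $169=13\cdot 13$ and $8=2\cdot 4=4\cdot 2$. The paper builds a flattenable $(2,4)$-arrangement of $13$ segments and a flattenable $(4,2)$-arrangement of $13$ segments, then substitutes one into the other to get the flattenable $(8,8)$-arrangement of $169$ segments. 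The intersection graphs are the Cayley graph $\mathrm{Cay}(\mathbb{Z}_{13};1,5)$ (triangle-free with independence number~$4$) and its complement; $13$ is optimal here because the Ramsey number $R(3,5)=14$. Flattenability of these two $13$-segment pieces is genuinely delicate---they cannot be realized in convex position (convex $(2,4)$- and $(4,2)$-arrangements have at most $12$ and $11$ segments), so the easy flattening argument of~\cite{KPT97_ramsey} is unavailable---and the paper establishes it by writing down explicit endpoint coordinates as polynomials in a parameter $\varepsilon\to 0$ and mechanically checking the intersection pattern.
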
 

Similar questions were studied by Fox, Pach and Cs. T\'oth~\cite{FPT_intersection} for string graphs, a class of graphs generalizing intersection graphs of segments. They proved, as a consequence of a stronger result, that for each positive integer $k$ there is a constant $c(k)>0$ such that in any system of $n$ curves in the plane where every two curves intersect in at most $k$ points, there is a subset of $n^{c(k)}$ curves that are pairwise disjoint or pairwise crossing. 

\section{Proof of Theorem 1} 

Both previous constructions for the lower bound~\cite{KPT97_ramsey, LMPT94_a_ramsey} use the same approach. The starting configuration is an arrangement $M_0$ of $n_0$ segments with at most $k_0$ pairwise crossing or pairwise disjoint segments. In the $i$-th step, an arrangement $M_{i}$ of $n_0^{i+1}$ segments is constructed from the arrangement $M_{i-1}$ by replacing each of its segments by a flattened copy (a precise definition will follow) of $M_{0}$, which acts as a ``thick segment''. Then two segments from different copies of $M_{0}$ cross if and only if the two corresponding segments in $M_{i-1}$ cross. Our new arrangement $M_{i}$ has then at most $k_0^{i+1}$ pairwise crossing or pairwise disjoint segments. This gives a lower bound $r(k) \ge k^{\log{n_0}/\log{k_0}}$ for infinitely many values of $k$.

We improve the construction by making a better starting arrangement. 
Unlike the previous constructions, our basic pieces will be arrangements with different maximal numbers of pairwise crossing and pairwise disjoint segments. By putting them together, we obtain our starting arrangement $M_0$.

Let ${\rm Cay}({\mathbb Z}_{13};1,5)$ denote the Cayley graph of the cyclic group ${\mathbb Z}_{13}$ corresponding to the generators $1$ and $5$. That is, $V({\rm Cay}({\mathbb Z}_{13};1,5))=\{1,2, \dots, 13\}$ and $E({\rm Cay}({\mathbb Z}_{13};1,5))=\{\{i,j\};1\le i < j \le 13, (j-i) \in \{1,5,8,12\}\}$. See Figure~\ref{obr_2_1_cayley}.

\begin{figure}
\begin{center}
\epsfbox{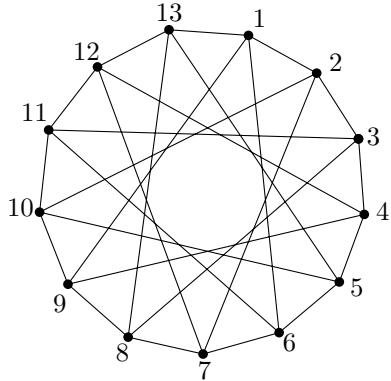}
\end{center}
\caption{A Cayley graph ${\rm Cay}({\mathbb Z}_{13};1,5)$.}
\label{obr_2_1_cayley}
\end{figure}

\begin{lemma}
The graph ${\rm Cay}({\mathbb Z}_{13};1,5)$ contains no clique of size $3$ and no independent set of size $5$.
\end{lemma}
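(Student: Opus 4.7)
My plan is to exploit the rich automorphism group of the circulant graph $\mathrm{Cay}(\mathbb{Z}_{13};1,5)$. Beyond the translations $x\mapsto x+c$, the multiplicative map $x\mapsto 5x$ is also an automorphism, because $5^2\equiv -1\pmod{13}$ and so $\langle 5\rangle=\{1,5,12,8\}=\{\pm 1,\pm 5\}$ fixes the connection set setwise. This gives an automorphism group of order at least $52$, which I will use to cut down the case analysis below.

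For the triangle-freeness, by vertex-transitivity it is enough to check that no two neighbors of $0$ are adjacent. The neighbors of $0$ are $\{1,5,8,12\}=\{\pm 1,\pm 5\}$, and I would simply list the six pairwise differences ($4,7,11,3,7,4$) and observe that none of them lies in the connection set $\{\pm 1,\pm 5\}$. This takes one line of modular arithmetic.

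For the independence number, assume for contradiction that there is an independent set $S$ of size $5$. By vertex-transitivity I may assume $0\in S$, so $S\setminus\{0\}$ lies in the non-neighborhood $N=\{2,3,4,6,7,9,10,11\}$. Under the stabilizer of $0$ (generated by $x\mapsto 5x$), the set $N$ splits into two orbits $A=\{2,3,10,11\}$ and $B=\{4,6,7,9\}$, so up to automorphism I may further assume that the second element of $S$ is either $2$ or $4$. In each case I would list the common non-neighborhood of $\{0,2\}$ (respectively $\{0,4\}$) inside $N$, compute the induced subgraph, and show it has independence number at most $2$: I expect the first induced subgraph to be a perfect matching $\{4,9\}\cup\{6,11\}$ on four vertices, and the second to be a $5$-cycle on $\{2,6,7,10,11\}$. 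In either case $|S|\le 1+1+2=4$, a contradiction.

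The main obstacle, as I see it, is purely organizational rather than mathematical: without the reduction via the order-$4$ multiplicative automorphism one would face a much larger case split on the second vertex of $S$, and even more if one did not exploit vertex-transitivity to fix the first. Recognizing that $5$ generates a cyclic subgroup of order $4$ in $\mathbb{Z}_{13}^*$ that stabilizes the connection set is what reduces the proof to two routine computations of small induced subgraphs.
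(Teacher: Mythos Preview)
Your argument is correct. The triangle-freeness step is essentially the paper's: both amount to checking that no two elements of the connection set $\{\pm 1,\pm 5\}$ differ by an element of the connection set (the paper phrases this as ``$\{1,5,8,12\}$ contains no solution of $k=l+m$'').

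For the independence number the paper takes a different and slightly shorter route. Rather than invoking the multiplicative automorphism $x\mapsto 5x$, it argues by pigeonhole on cyclic gaps: five elements of $\mathbb{Z}_{13}$ have consecutive gaps summing to $13$, and in an independent set every gap is at least $2$ (since $1$ lies in the connection set), so some gap is exactly $2$. A translation then lets one assume $\{1,3\}\subset A$, forcing the remaining three elements into $\{5,7,10,12\}$; but $\{5,10\}$ and $\{7,12\}$ are edges, so at most two of these can be chosen. This needs only one case and a two-line endgame, at the price of spotting the gap argument. Your approach trades that insight for the observation that $5$ has multiplicative order $4$ in $\mathbb{Z}_{13}^*$ and stabilizes the connection set, which gives a more mechanical reduction but leaves two cases (your perfect matching on $\{4,6,9,11\}$ and $5$-cycle on $\{2,6,7,10,11\}$ are both computed correctly).
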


\begin{proof} 
Suppose that $a<b<c$ are three vertices of ${\rm Cay}({\mathbb Z}_{13};1,5)$ inducing a clique. Then the numbers $k=c-a, l=c-b$ and $m=b-a$ belong to the set $\{1,5,8,12\}$, but this set contains no triple $k,l,m$ satisfying the equation $k=l+m$; a contradiction.

Now suppose that $A=\{a<b<c<d<e\}$ is an independent set of ${\rm Cay}({\mathbb Z}_{13};1,5)$. By the pigeon-hole principle, $A$ contains two vertices with difference $2$ (modulo $13$). Thus we can without loss of generality assume that $a=1$ and $b=3$. It follows that $\{c,d,e\} \subseteq \{5,7,10,12\}$. But $A$ cannot contain both $5$ and $10$, neither both $7$ and $12$. Hence $|A \cap \{5,7,10,12\}|\le 2$; a contradiction.
\end{proof}

A {\em $(k,l)$-arrangement\/} is an arrangement of segments with at most $k$ pairwise crossing and at most $l$ pairwise disjoint segments.

An {\em intersection graph} $G(M)$ of an arrangement $M$ is a graph whose vertices are the segments of $M$ and two vertices are joined by an edge if and only if the corresponding segments intersect. 

An arrangement $M$ of segments is {\em flattenable\/} if for every $\varepsilon > 0$ there is an arrangement $M_{\varepsilon}$ with $G(M_{\varepsilon})=G(M)$ and two discs $D_1, D_2$ of radius $\varepsilon$ whose centers are at unit distance, such that each segment from $M_{\varepsilon}$ has one endpoint in $D_1$ and the second endpoint in $D_2$. A {\em flattened copy\/} of $M$ is the arrangement $M_{\varepsilon}$ with sufficiently small ${\varepsilon}$.

The key result is the following lemma.

\begin{lemma}
\begin{enumerate}
\item There exists a flattenable $(2,4)$-arrangement of $13$ segments.
\item There exists a flattenable $(4,2)$-arrangement of $13$ segments.
\end{enumerate}
\end{lemma}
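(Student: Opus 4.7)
For part (1) the plan is to realize the Cayley graph $\mathrm{Cay}(\mathbb{Z}_{13};1,5)$ as the intersection graph of a flattenable arrangement of $13$ segments; by the previous lemma such an arrangement is automatically a $(2,4)$-arrangement. Part (2) is handled symmetrically by realizing the complement $\mathrm{Cay}(\mathbb{Z}_{13};2,3,4,6)$, which by the same lemma has clique number $\le 4$ and independence number $\le 2$.

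A useful preliminary observation is that $\mathrm{Cay}(\mathbb{Z}_{13};1,5)$ is \emph{not} a permutation graph: in any transitive orientation the neighbourhood $\{v\pm 1,v\pm 5\}$ of a vertex $v$ is an independent set, which forces all four edges at $v$ to be oriented alike, and this is incompatible with the odd $13$-cycle on step-$1$ edges. Consequently the classical realization of flattenable arrangements, in which the $26$ endpoints lie on two straight lines and the intersection graph is the inversion graph of a permutation, is not available; the construction must genuinely exploit the 2D freedom inside the discs $D_1,D_2$.

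My concrete plan is to exploit the cyclic symmetry of Cay. For each $\varepsilon>0$, place the $D_1$-endpoint of segment $s_v$ on the boundary of $D_1$ at angle $2\pi v/13+\psi_1$ and its $D_2$-endpoint on the boundary of $D_2$ at angle $2\pi v/13+\psi_2$, for $v\in\mathbb{Z}_{13}$. A simultaneous rotation of both discs by $2\pi/13$ maps this arrangement to itself under the relabeling $v\mapsto v+1$, so the resulting intersection graph is a Cayley graph $\mathrm{Cay}(\mathbb{Z}_{13};S)$ for some symmetric subset $S\subseteq\{1,\ldots,12\}$. It remains to tune the two parameters $\psi_1,\psi_2$ so that $S=\{\pm 1,\pm 5\}$, i.e.\ so that the reference segment $s_0$ crosses precisely $s_{\pm 1}$ and $s_{\pm 5}$ and no other segment. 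The main obstacle is locating these offsets: this is a finite two-parameter problem that I will solve by direct analysis of the crossing conditions or by consulting a figure of the resulting arrangement. Once a suitable placement is found, the arrangement is flattenable by construction (it depends continuously on $\varepsilon$, with the intersection graph constant for all sufficiently small $\varepsilon>0$), and the verification reduces by the cyclic symmetry to checking the four adjacencies of the single reference segment $s_0$. Part (2) is obtained by an entirely analogous construction yielding the complementary Cayley graph $\mathrm{Cay}(\mathbb{Z}_{13};2,3,4,6)$ from a different choice of $\psi_1,\psi_2$.
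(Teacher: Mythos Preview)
Your plan has a genuine gap: the cyclically symmetric construction you propose cannot realize $\mathrm{Cay}(\mathbb{Z}_{13};1,5)$, for exactly the reason you identified as an obstacle. With endpoints at $(\varepsilon\cos\theta_L^v,\varepsilon\sin\theta_L^v)$ and $(1+\varepsilon\cos\theta_R^v,\varepsilon\sin\theta_R^v)$, a direct computation of the signed-area test shows that for small $\varepsilon$ the sign governing whether $s_v$ and $s_w$ cross is, to leading order $\varepsilon$, the sign of
\[
\bigl(\sin\theta_L^w-\sin\theta_L^v\bigr)\bigl(\sin\theta_R^w-\sin\theta_R^v\bigr).
\]
Thus, generically, two segments cross iff the vertical orders of their left and right endpoints disagree, and the resulting intersection graph is the permutation graph of the permutation taking one order to the other. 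Your two parameters $\psi_1,\psi_2$ merely select which permutation you get; they do not leave the class of permutation graphs. Since you already argued (correctly) that $\mathrm{Cay}(\mathbb{Z}_{13};1,5)$ is not a permutation graph---indeed it contains a $13$-cycle, hence is not even perfect---no choice of $\psi_1,\psi_2$ can work. The claimed ``2D freedom inside the discs'' is illusory here: the $x$-coordinates of the endpoints contribute only at order $\varepsilon^2$, and with all coordinates linear in $\varepsilon$ there is no mechanism for those higher-order terms to override the first-order permutation structure. The symmetry also does not survive in the paper's construction, which is manifestly not $\mathbb{Z}_{13}$-symmetric.

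What the paper actually does is abandon symmetry and instead give explicit coordinates for each of the $13$ segments as \emph{polynomials in $\varepsilon$ of widely varying degree} (up to $\varepsilon^8$), listed in two tables. This multi-scale structure is precisely what allows certain crossing decisions to be made at order $\varepsilon$, others at order $\varepsilon^2$, and so on, escaping the permutation-graph constraint. The verification that the intersection graph is correct for all sufficiently small $\varepsilon$ then reduces to checking the sign of the lowest-order nonzero coefficient in $78$ polynomial products $d_{i,j}$, which the paper carries out by a short computer program. If you want to salvage your approach, you would need to replace the single radius $\varepsilon$ by a hierarchy of scales, at which point the cyclic symmetry is no longer compatible with the construction and you are essentially back to an ad hoc search of the kind the paper performs.
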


Note that $13$ is the largest possible number of segments for these two types of arrangements since every graph with more than $13$ vertices contains either a clique of size $5$ or an independent set of size $3$~\cite{R94_small}.

Both previous constructions~\cite{KPT97_ramsey, LMPT94_a_ramsey} used {\em convex\/} starting arrangement, i.e., an arrangement of segments with endpoints in convex position. Convex arrangements are flattenable by a relatively simple argument~\cite{KPT97_ramsey}. However, Kostochka~\cite{K88_upper} proved that any convex $(k,k)$-arrangement has at most $(1+o(1))\cdot k^2\log k$ segments. He also gave a construction of a convex $(k,k)$-arrangement with $\Omega(k^2\log k)$ segments (see also~\cite{C08_circle}). \v{C}ern\'y~\cite{C08_circle} investigated convex $(k,l)$-arrangements for small values of $k$. He showed, in particular, that any convex $(2,4)$-arrangement has at most $12$ segments, and that any convex $(4,2)$-arrangement has at most $11$ segments.

Our starting arrangements thus cannot be convex. Hence their flattening will require a special approach.

\begin{proof}
For each sufficiently small $\varepsilon >0$, we construct an arrangement $M_a(\varepsilon)$ with intersection graph ${\rm Cay}({\mathbb Z}_{13};1,5)$ and an arrangement $M_b(\varepsilon)$ whose intersection graph is the complement of ${\rm Cay}({\mathbb Z}_{13};1,5)$. See Figure~\ref{obr_1_2442} for an illustration.

In Tables~\ref{tab_M_a} and~\ref{tab_M_b}, we provide precise coordinates of the endpoints of all the $13$ segments, as functions of $\varepsilon$. To achieve general position of the segments, which is required by our definition, we can slightly perturb the endpoints while preserving the intersection graph of the arrangement.

\begin{table}
\begin{center}
\begin{tabular}{l|l|l|l|l}  
     &     left $x$ & left $y$ & right $x$ & right $y$ \\
\hline
$1$  &  $-\varepsilon$     &   $0$        &    $1-2\varepsilon$    &  $2\varepsilon^2+2\varepsilon^6$ \\
$2$  &  $\varepsilon^2$    &   $\varepsilon-\varepsilon^3$    &    $1-\varepsilon^2$   &    $\varepsilon^3$ \\
$3$  &   $0$     &   $\varepsilon^4+\varepsilon^6$  &    $1$       &  $\varepsilon^3+3\varepsilon^4$ \\
$4$  &   $0$     &   $\varepsilon^4-\varepsilon^6$  &    $1-2\varepsilon$    &  $2\varepsilon^2-\varepsilon^6$ \\
$5$  & $-\varepsilon+\varepsilon^2$  &   $0$        &    $1-2\varepsilon^2$  &  $2\varepsilon^3-2\varepsilon^4$ \\
$6$  &  $-\varepsilon$     &   $2\varepsilon^6$     &    $1-\varepsilon$     &   $2\varepsilon^6$ \\
$7$  &   $0$     &   $\varepsilon^6$      &    $1$       &  $\varepsilon^3+2\varepsilon^4$ \\
$8$  &   $0$     &   $\varepsilon$        &    $1+\varepsilon^3$   &     $0$ \\
$9$  &   $0$     &   $\varepsilon$        &    $1-2\varepsilon^2$  &  $2\varepsilon^3-\varepsilon^4$ \\
$10$ & $-\varepsilon^2+3\varepsilon^3$&   $3\varepsilon^6$     &    $1-2\varepsilon$    &  $2\varepsilon^2+\varepsilon^6$ \\
$11$ & $-\varepsilon^2$    &   $\varepsilon^6$      &    $1-2\varepsilon^2$  &  $2\varepsilon^3-3\varepsilon^4$ \\
$12$ &   $0$     &   $\varepsilon^4$      &    $1$       &     $0$ \\
$13$ &  $-\varepsilon$     &   $0$        &    $1+\varepsilon$     &     $0$ \\
\end{tabular}
\end{center}
\caption{Arrangement $M_a(\varepsilon)$.}
\label{tab_M_a}
\end{table}

\begin{table}
\begin{center}
\begin{tabular}{l|l|l|l|l}  
     &     left $x$ & left $y$ & right $x$ & right $y$ \\
\hline
$1$   & $\varepsilon$ &         $\varepsilon^2-\varepsilon^3+\varepsilon^4-2\varepsilon^5$  &   $1+\varepsilon^2$ &        $-\varepsilon^4+\varepsilon^6$ \\
$2$   & $0$ &                   $\varepsilon^2+3\varepsilon^5$  &             $1-\varepsilon^3$ &         $\varepsilon^7$ \\
$3$   & $0$ &                   $\varepsilon^2+4\varepsilon^5$  &             $1+\varepsilon$ &          $-\varepsilon^3$ \\
$4$   & $0$ &                   $2\varepsilon^3$  &                 $1+3\varepsilon^4$ &       $-\varepsilon^8$ \\
$5$   & $\varepsilon-\varepsilon^2+\varepsilon^3$ &           $\varepsilon^2-\varepsilon^3+\varepsilon^4-\varepsilon^8$ &      $1+\varepsilon$ &    $-\varepsilon^4$ \\
$6$   & $0$ &                   $\varepsilon^2+\varepsilon^5$ &              $1+\varepsilon$ &          $-\varepsilon^3$ \\
$7$   & $0$ &                   $\varepsilon^2+5\varepsilon^5$ &             $1+3\varepsilon^4$ &       $-3\varepsilon^7$ \\
$8$   & $\varepsilon-\varepsilon^2+\varepsilon^3+\varepsilon^4+2\varepsilon^5$ &  $\varepsilon^2-\varepsilon^3+\varepsilon^4+\varepsilon^5+\varepsilon^6$ &  $1+\varepsilon-\varepsilon^4$ &      $-\varepsilon^3$ \\
$9$   & $0$ &                   $\varepsilon^2$ &        $1+\varepsilon$ &          $-\varepsilon^4$ \\
$10$  & $0$ &                   $0$ &                    $1+5\varepsilon^3$ &        $0$ \\
$11$  & $0$ &                   $\varepsilon^2+2\varepsilon^5$ &             $1+3\varepsilon^4-2\varepsilon^5$ &   $\varepsilon^8$ \\
$12$  & $\varepsilon-\varepsilon^3$ &               $\varepsilon^3-\varepsilon^4$ &              $1+\varepsilon$ &          $-\varepsilon^4$ \\
$13$  & $0$ &                   $0$ &               $1$ &             $\varepsilon$ \\
\end{tabular}
\end{center}
\caption{Arrangement $M_b(\varepsilon)$.}
\label{tab_M_b}
\end{table}

\begin{figure}
\begin{center}
\epsfbox{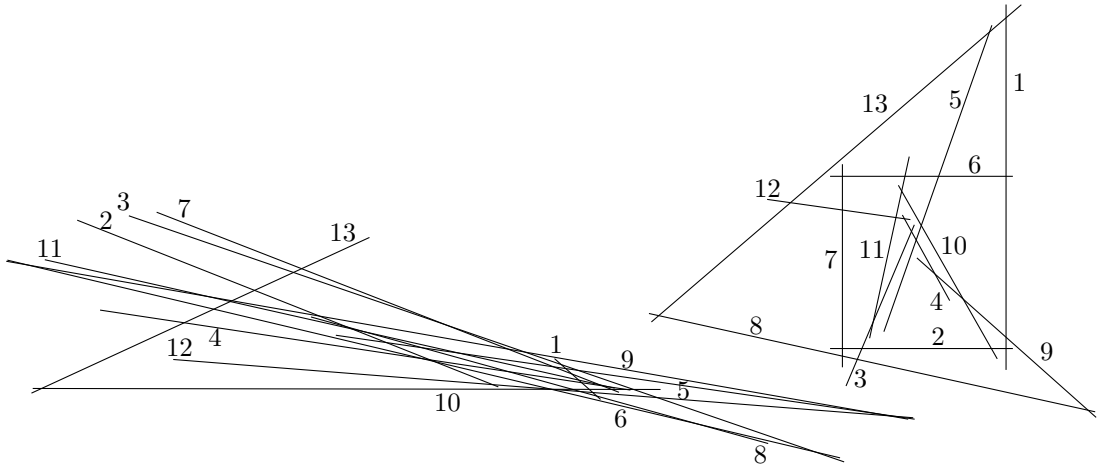}
\end{center}
\caption{A partially flattened $(4,2)$-arrangement of 13 segments (left) and a $(2,4)$-arrangement of 13 segments (right).}
\label{obr_1_2442}
\end{figure}

Since the coordinates of all the left endpoints converge to $(0,0)$ and the coordinates of all the right endpoints converge to $(1,0)$, it remains to verify that for sufficiently small $\varepsilon > 0$, each of these two described arrangements has the desired intersection graph. This is a straightforward calculation, which can be done by the following simple algorithm.

We use the fact that the functions describing the coordinates are polynomials in $\varepsilon$. For $i \in {1, 2, \dots, 13}$, let $s_i$ be the $i$-th segment of the arrangement and let $l_x(i), l_y(i), r_x(i), r_y(i)$ be the polynomials representing the coordinates of the left and the right endpoint of $s_i$. For each pair $i<j$, we need to determine whether $s_i$ and $s_j$ cross if $\varepsilon$ is small enough.

Let $s$ be a segment with endpoints $(l_x,l_y)$ and $(r_x,r_y)$ and let $s'$ be a segment with endpoints $(l'_x,l'_y)$ and $(r'_x,r'_y)$. Let $p$ be the line containing $s$, and let $p'$ be the line containing $s'$. The segments $s$ and $s'$ intersect if and only if $s' \cap p \neq \emptyset$ and $s \cap p' \neq \emptyset$. We have $p=\{(x,y); ax+by+c=0 \}$, where $a=r_y-l_y$, $b=r_x-l_x$ and $c= r_xl_y - l_xr_y$. Thus, $s' \cap p \neq \emptyset$ if and only if $(al'_x+bl'_y+c)(ar'_x+br'_y+c)\le 0$. The relation $s \cap p' \neq \emptyset$ can be expressed similarly. 

The algorithm now follows. For each $i$, compute the polynomials $a_i=r_y(i)-l_y(i)$, $b_i=r_x(i)-l_x(i)$ and $c_i= r_x(i)l_y(i) - l_x(i)r_y(i)$. Then for each pair $i\neq j$, compute the polynomial $d_{i,j}=(a_il_x(j)+b_il_y(j)+c_i)(a_ir_x(j)+b_ir_y(j)+c_i)$. Now $s_i$ and $s_j$ intersect if and only if each $d_{i,j}$ and $d_{j,i}$ is nonpositive in some positive neighborhood of $0$. That is, the polynomial is either zero or the coefficient by the non-zero term of the smallest order is negative.

A program verifying both constructions can be downloaded from the following webpage: \href{http://kam.mff.cuni.cz/~kyncl/programs/segments}{http://kam.mff.cuni.cz/\~{}kyncl/programs/segments}. 
\end{proof}

Now we are ready to finish the proof of Theorem~\ref{veta_hlavni}.
Take a sufficiently flattened arrangement $M_a(\varepsilon)$ and replace each of its segments by a copy of a sufficiently flattened arrangement $M_b(\delta)$. In this way we obtain our starting flattenable $(8,8)$-arrangement $M_0$ of $169$ segments. Then we proceed by the method described at the beginning of this section.

\section{Non-flattenable arrangements}

Since the flattenable arrangements are the main tool in the construction in the previous section, it is natural to ask whether every arrangement of segments can be flattened. A necessary condition for an arrangement to be flattenable is the existence of a line crossing all the segments, in a sufficiently flattened realization. We show the following.

\begin{theorem}\label{veta_ne_primkou}
There exists an intersection graph of segments which cannot be realized by an arrangement of segments crossing a common line. 
\end{theorem}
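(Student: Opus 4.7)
The plan is to exhibit a specific intersection graph $G$ of segments such that $G$ admits no realization by segments meeting a common line, by building $G$ around a small geometric obstruction (a triangle of three pairwise intersecting segments) together with auxiliary ``blocker'' segments that combinatorially pin the three triangle-sides in place. First I would describe an explicit arrangement $M$, giving coordinates in the style of Tables~\ref{tab_M_a}--\ref{tab_M_b}, whose core consists of three segments $s_1,s_2,s_3$ that pairwise cross at three distinct points, enclosing a bounded triangular region $\Delta$. Additional blocker segments would then be added outside $\Delta$, designed so that each blocker crosses two of the three central segments but not the third, and arranged so that any extension of a central segment past a vertex of $\Delta$ would create a forbidden intersection with one of the blockers.

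Next, suppose for contradiction that $G = G(M)$ is realized by an arrangement $M'$ in which every segment crosses a common line $\ell$. Because the intersection graph is preserved, the three copies $s_1',s_2',s_3' \in M'$ still pairwise intersect at three distinct points, the vertices of a triangle $\Delta'$; each $s_i'$ contains the edge of $\Delta'$ joining the two intersection points $s_i' \cap s_j'$ and $s_i' \cap s_k'$ (with $\{i,j,k\}=\{1,2,3\}$). Combining the forbidden intersections of the blocker segments with $s_i'$ and their required intersections with the other two central segments yields, after a short case analysis, that each $s_i'$ is essentially pinned to this edge of $\Delta'$: it does not extend past either of its two endpoints. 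At this point the contradiction is geometric: a line meets the boundary of a bounded convex region in at most two points, so $\ell$ crosses at most two of the three segments making up $\partial \Delta'$, contradicting the hypothesis that $\ell$ crosses every segment of $M'$.

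The hardest step is the middle one: translating the combinatorial data of $G$ into a geometric extent-bound on each $s_i'$ in the realization $M'$. Intersection/non-intersection data does not directly control absolute extents of segments, so the blockers must be arranged so that every way of extending an $s_i'$ beyond $\Delta'$ is forced to meet one of them, creating a forbidden intersection. In practice this may require several layered blockers on each side of $\Delta$ together with a careful enumeration of the ways an extension could occur. If the triangle gadget proves too delicate for a small explicit construction, a natural fallback is to use a convex cyclic gadget with more pairwise intersecting segments and invoke a parity argument on the crossings of $\ell$ with the boundary, or a counting argument based on a Ramsey-type incompatibility between general segment arrangements and those meeting a common line.
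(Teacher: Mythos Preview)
Your proposal identifies a plausible geometric obstruction---three segments that are exactly the sides of a triangle cannot all be crossed by a single line---but the crucial step of forcing this configuration from intersection-graph data alone is not carried out, and I do not see how it can be done with the local ``blocker'' idea you describe. A blocker $b$ that crosses $s_j$ and $s_k$ but not $s_i$ carries almost no positional information in a new realization $M'$: it only has to meet $s_j'$ and $s_k'$ somewhere and miss $s_i'$, and nothing forces it near the vertex $s_j'\cap s_k'$ or on any particular side of that vertex. In particular, nothing in your scheme rules out a realization in which $s_1',s_2',s_3'$ are long, nearly parallel segments crossing near a common point (still pairwise intersecting, still bounding a thin triangle $\Delta'$), with every blocker placed harmlessly far from $\Delta'$; a common transversal line for all of $M'$ is then easy to arrange. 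You seem to sense this difficulty, since you hedge with ``may require several layered blockers'' and list vague fallbacks, but none of these are made concrete enough to constitute a proof.

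The paper's argument takes a completely different, global route. It builds a $7\times 7$ grid of segments enclosed by a frame cycle of $56$ segments, connected to the grid by joining segments; this frame--joining structure forces the grid to be combinatorially rigid in \emph{any} realization (in fact in any pseudosegment realization). Eight short segments are then placed, one in each of eight distinct cells of the induced $3\times 3$ coarse grid. The obstruction is a counting argument: any single pseudosegment can pass through at most $7$ of the $9$ coarse cells, so no line can meet all eight short segments. The idea you are missing is that rigidity of a segment arrangement from its intersection graph comes from global structures---a frame cycle bounding a region, a grid whose crossing orders are forced---rather than from local incidence constraints around a single triangle.
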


\begin{theorem}\label{veta_crossable_nonflat}
There exists an arrangement of segments crossing a common line which is not flattenable.
\end{theorem}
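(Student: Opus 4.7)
The plan is to find an arrangement of segments crossing a common line whose intersection graph is the 5-cycle $C_5$, and then deduce non-flattenability from the fact that $C_5$ is not a permutation graph.

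First I would record the elementary observation that the intersection graph of any flattenable arrangement is a permutation graph. In a realization $M_\varepsilon$ with endpoints in two $\varepsilon$-discs $D_1,D_2$ whose centers lie at distance $1$ along a line $\ell$, for $\varepsilon$ sufficiently small every segment is almost parallel to $\ell$; ordering the endpoints within each disc by projection onto the direction perpendicular to $\ell$, two segments cross if and only if these two orderings are inverted on them, which is precisely the edge rule of a permutation graph. Since $C_5$ is not a comparability graph (no odd cycle of length at least five admits a transitive orientation), it is not a permutation graph, so it is enough to realize $C_5$ as the intersection graph of some arrangement of segments crossing a common line.

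For the realization I would place the five crossing points $p_1<p_2<p_3<p_4<p_5$ on the horizontal line $L$ and choose five supporting lines through them with slope angles $\alpha_1,\dots,\alpha_5\in(0,\pi)$ in a non-monotone order. A direct computation shows that two such segments, viewed as full lines, cross above $L$ exactly when $\alpha_j>\alpha_i$ (with $p_i<p_j$), and cross below $L$ exactly when $\alpha_j<\alpha_i$, so every pair of segments is a ``ray-crossing'' in exactly one of the two half-planes. I would then pick the slope angles so that the five edges of $C_5$ are distributed between the two half-planes---for instance, $\alpha_i\in\{30^\circ,50^\circ,10^\circ,40^\circ,20^\circ\}$ places $(1,2),(3,4)$ as ray-crossings above $L$ and $(2,3),(4,5),(1,5)$ as ray-crossings below---and use the five upper half-lengths $M_i$ and five lower half-lengths $N_i$ as ten independent parameters: every unwanted ray-crossing corresponding to a non-edge of $C_5$ is killed by shortening one of the two segments involved, while each required ray-crossing is preserved by keeping both segments long enough. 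The resulting arrangement consists of five segments in general position, all crossing $L$, with intersection graph $C_5$; since $C_5$ is not a permutation graph, it is not flattenable.

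The main technical obstacle is the simultaneous feasibility of the ten length constraints (five ``keep'' and five ``kill'' inequalities). However, each ``kill'' inequality is a disjunction that can be imposed on either of the two segments involved, so the constraints can be satisfied by a greedy sequential choice of lengths, and an explicit numerical choice is routine to verify; a final tiny perturbation of the endpoints then guarantees general position without affecting the intersection graph or the common transversal $L$.
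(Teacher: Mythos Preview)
Your argument rests on the claim that the intersection graph of every flattenable arrangement is a permutation graph, and this claim is false. The flaw in the ``if and only if'' is that the endpoints lie in two small \emph{discs}, not on two parallel \emph{lines}: the left endpoints have varying $x$-coordinates as well as $y$-coordinates, so comparing their $y$-projections does not tell you which segment lies above the other at any common abscissa. Concretely, take
\[
A:\ (-\varepsilon,0)\ \text{to}\ (1-\varepsilon,0),\qquad
B:\ (\varepsilon,\varepsilon^2)\ \text{to}\ (1+\varepsilon,-\varepsilon^3).
\]
Both segments have their endpoints in the required $\varepsilon$-discs, and the perpendicular ($y$) orders of the endpoints are inverted ($0<\varepsilon^2$ on the left, $-\varepsilon^3<0$ on the right). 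Yet the line through $B$ meets the $x$-axis at $x=\varepsilon+1/(1+\varepsilon)>1-\varepsilon$, i.e.\ to the right of $A$'s right endpoint, so $A$ and $B$ are disjoint. Thus ``inverted $y$-orders'' does not imply ``crossing'', and the intersection graph need not be a permutation graph.

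In fact the paper itself furnishes a direct refutation of your approach: Lemma~3 produces a flattenable arrangement whose intersection graph is ${\rm Cay}(\mathbb{Z}_{13};1,5)$, and this graph contains an induced $C_5$ on the vertices $1,2,7,8,13$ (consecutive differences $1,5,1,5,1\pmod{13}$; all non-consecutive differences are $6,7$ or $11$). Restricting each flattened realization $M_a(\varepsilon)$ to those five segments gives, for every $\varepsilon>0$, an arrangement with endpoints in the two $\varepsilon$-discs and intersection graph exactly $C_5$. So $C_5$ \emph{is} the intersection graph of a flattenable arrangement, and no arrangement with intersection graph $C_5$ can witness Theorem~\ref{veta_crossable_nonflat}.

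For comparison, the paper's proof does not proceed via any graph-class obstruction. It isolates a five-segment configuration $M_0$ together with a transversal line $q$ (Lemma~7) for which a direct metric argument shows that no homeomorphic copy can be flattened, and then embeds $M_0$ into a much larger arrangement with a rigid grid-and-frame scaffolding that forces, in \emph{every} realization crossed by a common line, the subarrangement on $p_1,\dots,p_5,q$ to be combinatorially the one in Lemma~7. The obstruction is thus about the full combinatorial type of the arrangement relative to the transversal, not about the intersection graph alone.
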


\subsection{Proof of Theorem~\ref{veta_ne_primkou}}

Let $G$ be an intersection graph of the arrangement in Figure~\ref{obr_3_noncross}. The arrangement consists of $7$ {\em horizontal\/} and $7$ {\em vertical\/} segments forming a grid, the $56$ {\em frame\/} segments forming a cycle, $28$ {\em joining\/} segments connecting a grid segment with a segment of the frame (each grid segment is joined to the frame by two joining segments and every other segment from the frame is used), and finally $8$ {\em short\/} segments, each crossing one vertical and one horizontal segment from the grid. 

\begin{figure}
\begin{center}
\epsfbox{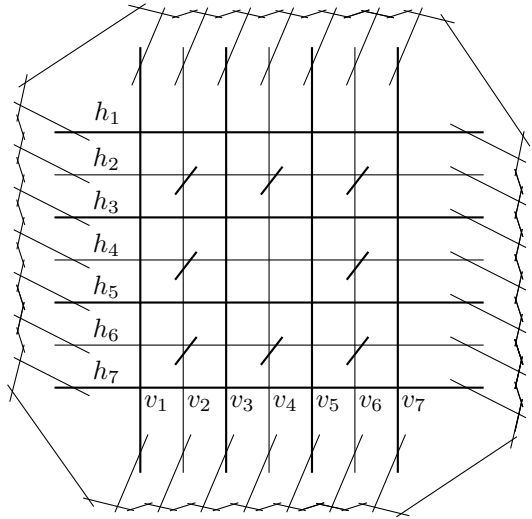}
\end{center}
\caption{A construction for Theorem~\ref{veta_ne_primkou}.}
\label{obr_3_noncross}
\end{figure}

We prove Theorem~\ref{veta_ne_primkou} in a slightly stronger form.

An {\em arrangement of pseudosegments\/} is a set of simple curves in the plane such that every two of the curves have at most one common point and any such point is a proper crossing. If $M$ is an arrangement of pseudosegments, then each curve from $M$, and also any curve $c$ such that $M \cup \{c\}$ is an arrangement of pseudosegments, is called a {\em pseudosegment}. 

\begin{proposition}
For any arrangement $M$ of pseudosegments whose intersection graph is $G$, no pseudosegment can cross all the curves from $M$.
\end{proposition}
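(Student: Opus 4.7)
The plan is to assume for contradiction that some pseudosegment $c$ meets every curve of $M$ and derive a topological obstruction from the structure of $G$. I would organize the proof around three ingredients: an analysis of the frame cycle, the rigidity imposed by the joinings together with the grid, and the local pinning provided by the short segments.

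First, I would extract a simple closed curve from the frame. The $56$ frame pseudosegments form an induced cycle $C_{56}$ in $G$: consecutive frame pseudosegments cross and non-consecutive ones do not. Concatenating, for each frame pseudosegment $f$, the arc of $f$ between its two crossings with its cyclic neighbors yields a simple closed curve $F$ in the plane, which by the Jordan curve theorem bounds a closed topological disk $D$. This is a standard trick that produces a global topological container for the rest of the arrangement.

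Second, I would use the joinings and the grid structure to pin down the topology inside $D$. Each of the $28$ joining pseudosegments has degree $2$ in $G$ (one crossing with a grid pseudosegment, one with a frame pseudosegment), and each grid pseudosegment is attached to $F$ by exactly two joinings at specified frame positions. Combined with the bipartite $7 \times 7$ structure of the grid (no horizontal-horizontal or vertical-vertical crossings, all $49$ horizontal-vertical crossings present), this combinatorial data restricts the embedding so that, up to swapping the two sides of $F$, the grid sub-arrangement inside $D$ is topologically a $7 \times 7$ pseudo-grid, with each grid pseudosegment ``docking'' at specified points on $F$.

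Third, I would use the short segments together with the previous steps to derive a contradiction. Each of the $8$ short segments $s$ has degree $2$ in $G$, meeting only its associated horizontal $h$ and vertical $v$; hence $s$ sits in a specific face of $\{h,v\}$ near their crossing, and for $c$ to cross $s$ the curve $c$ must pass through that specific face, which fixes the local cyclic order of $c, h, v$ at $h \cap v$. A putative universal crosser $c$ must simultaneously (a) cross every frame pseudosegment exactly once, interacting with $F$ in a globally constrained way, (b) cross every grid pseudosegment and every joining pseudosegment inside (or across) $D$, and (c) pass through the $8$ specific small faces prescribed by the short segments. Combining these constraints yields an inconsistent system and gives the desired contradiction.

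The main obstacle I expect is the last step: turning the global and local constraints on $c$ into a concrete impossibility. This is likely a finite but intricate case analysis that exploits the specific cyclic positions of the $8$ shorts relative to the grid and the specific frame attachments of the joinings, perhaps via a parity or winding argument that tracks how $c$ enters and leaves $D$ between successive short-segment crossings.
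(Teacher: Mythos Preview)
Your setup matches the paper's: extract a Jordan curve $F$ from the frame, place the grid and short pseudosegments in the bounded region, and use the joinings to force the grid pseudosegments into a $7\times 7$ pseudogrid with the short pseudosegments trapped in specific faces. Where you stall is exactly where you say you do: you have no concrete mechanism for the contradiction, only the hope that ``an inconsistent system'' will emerge from a case analysis or a parity/winding argument. That is the genuine gap.

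The paper's missing idea is a clean counting argument, not a case analysis, and it uses far less about $c$ than you propose to. Pass to the \emph{coarse} $3\times 3$ grid formed by the odd-indexed grid pseudosegments $v_1,v_3,v_5,v_7,h_1,h_3,h_5,h_7$; each of the $8$ short pseudosegments lies in a distinct big square of this coarse grid. So a pseudosegment $c$ crossing every curve of $M$ must visit at least $8$ of the $9$ big squares. Now count: $c$ crosses each of the four boundary pseudosegments $v_1,v_7,h_1,h_7$ at most once, so it enters and leaves the grid at most twice; and within one pass through the grid, visiting $k$ big squares costs at least $k+1$ crossings with coarse-grid pseudosegments (one between each pair of consecutive squares, plus the entry/exit). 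Tallying over the possible patterns (both endpoints outside; one inside; both inside) against the total of $8$ available coarse-grid crossings gives at most $7$ big squares visited, which is the contradiction. Note in particular that the paper never uses the requirement that $c$ cross the frame or joining pseudosegments; those constraints, which you list under (a) and (b), are red herrings for the final step.
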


\begin{proof}
Let $M$ be an arrangement of pseudosegments whose intersection graph is $G$. We use the terms {\em frame/grid/horizontal/vertical/joining/short\/} pseudosegment in a similar meaning as above. The union $\Gamma$ of the frame pseudosegments contains a unique closed curve $\gamma$. Each frame pseudosegment intersects $\gamma$ in a connected arc and the cyclic order of these arcs along $\gamma$ is uniquely determined (up to inversion). Both $\Gamma$ and $\gamma$ cut the plane into two connected regions. Since the subgraph of $G$ induced by the grid and short vertices is connected and separated from the frame cycle by the joining vertices, the union of the grid and short pseudosegments is connected and disjoint from $\Gamma$. Thus we can without loss of generality assume that all the grid and short pseudosegments lie in the region $\Omega$ bounded by $\Gamma$.

The order of the intersections of the joining pseudosegments with $\Gamma$ along the boundary of $\Omega$ is uniquely determined. Each grid pseudosegment together with its two joining pseudosegments divides $\Omega$ into two connected components. All the $7$ vertical pseudosegments with their joining pseudosegments divide $\Omega$ into $8$ connected components and the ``horizontal'' order of the vertical pseudosegments is uniquely determined. Each horizontal pseudosegment has to start in the leftmost region and end in the rightmost region and is forced to cross the vertical pseudosegments in the same order and orientation. Similarly each vertical pseudosegments has to cross all the horizontal pseudosegments in the same order and orientation. It follows that the grid pseudosegments form a ``pseudogrid'' homeomorphic to the grid in Figure~\ref{obr_3_noncross}. Therefore, we can further assume that the grid pseudosegments are straight-line segments forming a regular square grid.

Label the vertical and the horizontal segments of the grid consecutively by $v_1, v_2, \dots v_7$ and $h_1, h_2, \dots h_7$. The odd-numbered segments form a {\em coarse\/} grid of $3 \times 3$ {\em big\/} squares. Each of the eight short pseudosegments is contained in one of the big squares, since for each orthogonal pair $v_{2i}, h_{2j}$ of the even-labeled grid segments, the big square determined by the segments $v_{2i-1}, v_{2i+1}, h_{2j-1}$ and $h_{2j+1}$ is the only face in the arrangement $M \setminus (\{ v_{2i}, h_{2j} \} \cup M_s)$ intersected by both $v_{2i}$ and $h_{2j}$ (here $M_s$ denotes the set of short pseudosegments in $M$).
Therefore, each pseudosegment that crosses all pseudosegments in $M$ must intersect at least $8$ big squares in the coarse grid. We show that such pseudosegment does not exist.

Let $p$ be a pseudosegment. Suppose that $p$ has both its endpoints outside the grid. Then $p$ can enter and leave the grid at most twice, since in each traversal of the grid $p$ crosses two of the four boundary segments $v_1,v_7,h_1,h_7$. If $p$ intersects $k$ big squares in a traversal, it has to cross at least $k+1$ segments of the coarse grid (including two of the boundary segments). It follows that $p$ can intersect at most $5$ big squares in one traversal, and at most $6$ big squares in two traversals. 

Now suppose that $p$ starts outside and ends inside the grid. Suppose further that $p$ intersects $k$ big squares during the first traversal and then $l$ big squares after entering the grid for the second time. Then $p$ has to cross at least $k+1+l$ coarse grid segments. Since $p$ avoids one of the boundary segments, we have $k+l \le 6$.

If $p$ starts and ends inside the grid and intersects $k$ big squares before it reaches the boundary of the grid for the first time, $l$ during the following traversal, and $m$ after it enters the grid for the second time, it has to cross at least $k+l+1+m$ coarse grid segments. That gives us $k+l+m \le 7$.

It follows that any pseudosegment can intersect at most $7$ big squares, thus at most $7$ short pseudosegments.
\end{proof}

\subsection{Proof of Theorem~\ref{veta_crossable_nonflat}}

The core of the construction is the arrangement of five segments in Figure~\ref{obr_4_nonflat_core}.

\begin{lemma}\label{lemma_core} 
The arrangement $M_0$ of segments $p_1, \dots, p_5$ crossing a common vertical line $q$ in Figure~\ref{obr_4_nonflat_core}, left, cannot be homeomorphically flattened. More precisely, for a sufficiently small $\varepsilon$, there is no homeomorphism of the plane mapping each segment $p_i$ onto a segment, the line $q$ onto a line, the left endpoint of each segment to an $\varepsilon$-neighborhood of the point $(0,0)$, and the right endpoint of each segment to an $\varepsilon$-neighborhood of the point $(1,0)$.
\end{lemma}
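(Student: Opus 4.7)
The plan is to argue by contradiction: suppose that for some small $\varepsilon > 0$ a plane homeomorphism $h$ satisfying the four properties of the lemma exists, and derive a contradiction by showing that the combinatorial invariants of the flattened arrangement $\tilde M := h(M_0)$ together with $\tilde q := h(q)$ cannot match those of $M_0 \cup \{q\}$. First I would list the combinatorial invariants that must be preserved: the intersection graph of the five segments, the cyclic order in which $q$ meets them, the side of $q$ on which each endpoint lies, and (since $h$ carries a small Jordan disc about $(0,0)$ back to a Jordan curve enclosing exactly the left endpoints of $M_0$, and analogously on the right) the cyclic order of the five segments around the cluster of their left, respectively right, endpoints.

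Second, I would extract the geometric consequences of the flattening. Because the endpoints of every $\tilde p_i$ lie within distance $\varepsilon$ of $(0,0)$ and $(1,0)$ respectively, each $\tilde p_i$ has slope $O(\varepsilon)$ and the whole arrangement sits inside a horizontal strip of width $O(\varepsilon)$; hence $\tilde q$ is a line whose slope is bounded away from zero, and for $\varepsilon$ sufficiently small all five crossings $\tilde q \cap \tilde p_i$ are contained in an $O(\varepsilon)$-neighborhood of a single point on the baseline from $(0,0)$ to $(1,0)$. In this regime, the cyclic order of left endpoints around the $\varepsilon$-disc $D_1$ coincides with the linear order of their $y$-coordinates, and symmetrically for $D_2$. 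Moreover, two segments $\tilde p_i$ and $\tilde p_j$ cross if and only if the relative orders of their left and right $y$-coordinates are reversed, and the order of crossings along $\tilde q$ is obtained from the linear order $\sigma_L$ of left endpoints by performing exactly those transpositions that correspond to pairs of segments whose crossing $x$-coordinate is smaller than that of $\tilde q$.

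The main obstacle, and third step, is the concrete combinatorial check. From Figure~\ref{obr_4_nonflat_core} I would read off the intersection graph of $M_0$, the cyclic orders around the left and right endpoint clusters, the order in which $q$ meets the five segments, and which crossings of pairs of segments occur on which side of $q$. I would then enumerate all pairs $(\sigma_L, \sigma_R)$ of linear orders of $\{1,\dots,5\}$ consistent with the cyclic endpoint data; for each such pair the prescribed intersection graph pins down which index pairs must be inverted, so $\sigma_R$ is forced from $\sigma_L$. For each surviving candidate I would then show that the resulting admissible order on $\tilde q$ together with the prescribed positions of crossings relative to $\tilde q$ disagrees with the data read from $M_0$. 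The design of $M_0$ is precisely so that in every case either some crossing is forced to lie on the wrong side of $\tilde q$ or the order on $\tilde q$ itself comes out wrong; this small but decisive case analysis on orderings of five elements is the heart of the argument.
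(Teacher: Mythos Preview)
Your plan and the paper's proof follow genuinely different routes. The paper does \emph{not} enumerate orderings; it gives a short metric argument. After assuming a flattened realization, it looks at the crossing $x=p_3\cap p_4$ and compares it, via an angle inequality, to the crossing $v=p_1\cap p_2$ and to the points where the lines through $p_3,p_4$ meet $p_1,p_2$. Because all right endpoints cluster near $(1,0)$ and all slopes are $O(\varepsilon)$, this comparison forces $x$ to lie near $(1,0)$ as well. Then the triangle $T$ bounded by $p_3$, $p_4$ and $q$ has all three vertices near $(1,0)$; but the homeomorphism puts the \emph{left} endpoint of $p_5$ inside $T$, contradicting that this endpoint must lie near $(0,0)$. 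The whole argument is a few lines and uses no case analysis.

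Your outline is not wrong in spirit, but there is a real gap. The decisive topological constraint in $M_0$ is a \emph{face} constraint: the left endpoint of $p_5$ sits inside the triangle cut out by $p_3$, $p_4$ and $q$. This is not encoded by the invariants you list (intersection graph, cyclic orders of the two endpoint clusters, the order along $q$, and the side of $q$ of each crossing). Those data fix $\sigma_L$, $\sigma_R$ and one intermediate snapshot of the wiring diagram, but they do not determine the full allowable sequence, nor which face of the arrangement a given endpoint occupies. So even if you carried out the enumeration you describe, you might find candidate orders that pass all of your tests yet are still excluded by the face constraint you have not recorded. To make your route work you would have to add, at minimum, the order of crossings along each $p_i$ (equivalently, the full combinatorial type of $M_0\cup\{q\}$) to your list of preserved data, and then actually perform the case check; only then would you know the enumeration closes. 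The paper's metric shortcut bypasses all of this by directly locating the triangle $T$.
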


\begin{figure}
\begin{center}
\epsfbox{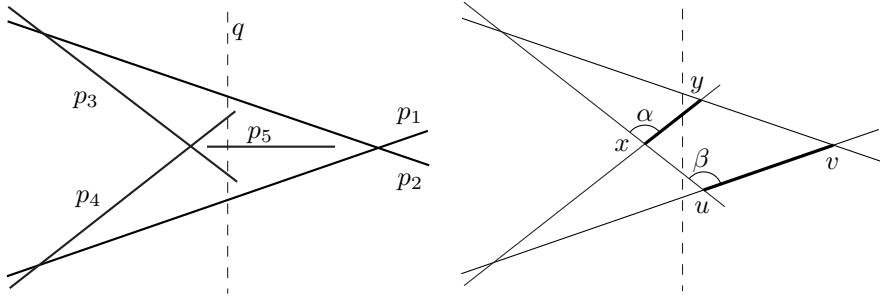}
\end{center}
\caption{Arrangement $M_0$, a core of the construction for Theorem~\ref{veta_crossable_nonflat}.}
\label{obr_4_nonflat_core}
\end{figure}

\begin{proof}
Suppose for contradiction that $M_0$ is already flattened by such a homeomorphism (for sufficiently small $\varepsilon$). Let $x\in p_3\cap p_4$ and $v\in p_1\cap p_2$. Let $y$ be an intersection of $p_2$ with the line extending the segment $p_4$. Similarly, let $u$ be an intersection of $p_1$ with the line extending the segment $p_3$. See Figure~\ref{obr_4_nonflat_core}, right. As all the right endpoints are close to $(1,0)$, the points $y$, $u$ and $v$ are also close to $(1,0)$ since they are to the right from the right endpoint of $p_3$ or $p_4$, and to the left from the right endpoint of $p_1$ or $p_2$. The slopes of all the segments are close to $0$, thus $\beta > \alpha > \pi/2$. It follows that $\| x-y \| < \| u-v \|$, hence $x$ is close to $(1,0)$ as well. 

The segments $p_3$ and $p_4$ and the line $q$ form a triangle $T$, which contains the left endpoint of $p_5$. Since all the vertices of $T$ are close to $(1,0)$, the left endpoint of $p_5$ is close to $(1,0)$ as well, a contradiction.
\end{proof}

By Lemma~\ref{lemma_core}, we only have to add some other segments to $M_0$ so that in any realization of the resulting arrangement $M$ in the plane such that all segments cross a common line $q$, the subarrangement $M_0$ (together with the line $q$) is homeomorphic to the arrangement in Figure~\ref{obr_4_nonflat_core}, left.

We add $18$ segments parallel to $p_2$ and $18$ segments parallel to $p_1$, so that they form an $18 \times 18$ grid as in Figure~\ref{obr_4_nonflat_grid}. All these $38$ segments are called {\em grid\/} segments. As in the construction in the previous section, by taking every odd grid segment we get a coarse $9 \times 9$ grid. These segments are denoted by $g_1, \dots, g_{10}$ and $h_1, \dots, h_{10}$ and drawn by full lines in Figure~\ref{obr_4_nonflat_grid}. We add $17$ {\em short\/} segments to $17$ cells of the coarse grid along the diagonal, each short segment crossing two (even) grid segments. We obtain an arrangement $M_1$ where the intersections between segments are defined by the drawing in Figure~\ref{obr_4_nonflat_grid}.

\begin{figure}
\begin{center}
\epsfbox{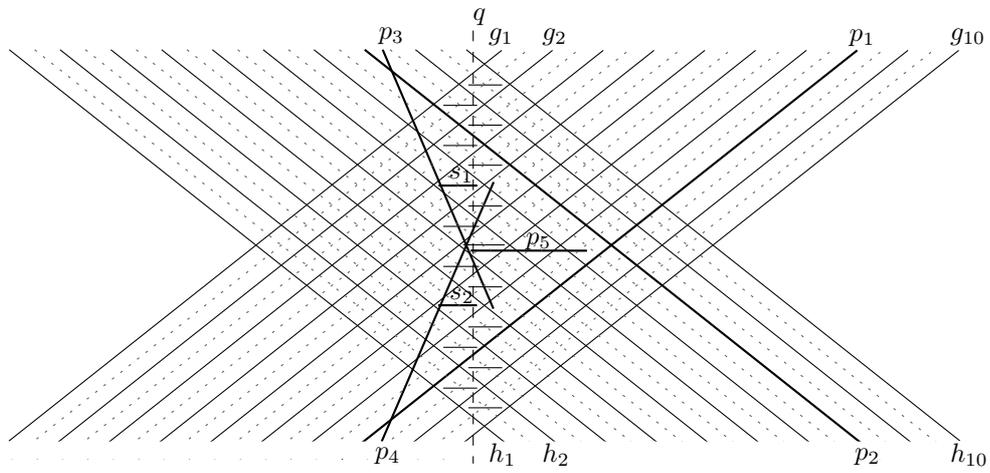}
\end{center}
\caption{Arrangement $M_1$ consisting of core, grid and auxiliary segments.}
\label{obr_4_nonflat_grid}
\end{figure}

To get the final arrangement $M$, we add a frame and some joining segments, as in the construction in the previous section. We add one joining segment for each $p_3$ and $p_4$, and two joining segments for each grid segment. In total, we add $78$ joining segments connected to every other segment of a cycle of length $156$. It is easy to ensure that all the added segments still cross the line $q$; see Figure~\ref{obr_4_nonflat_frame} for an example with smaller grid.

\begin{figure}
\begin{center}
\epsfbox{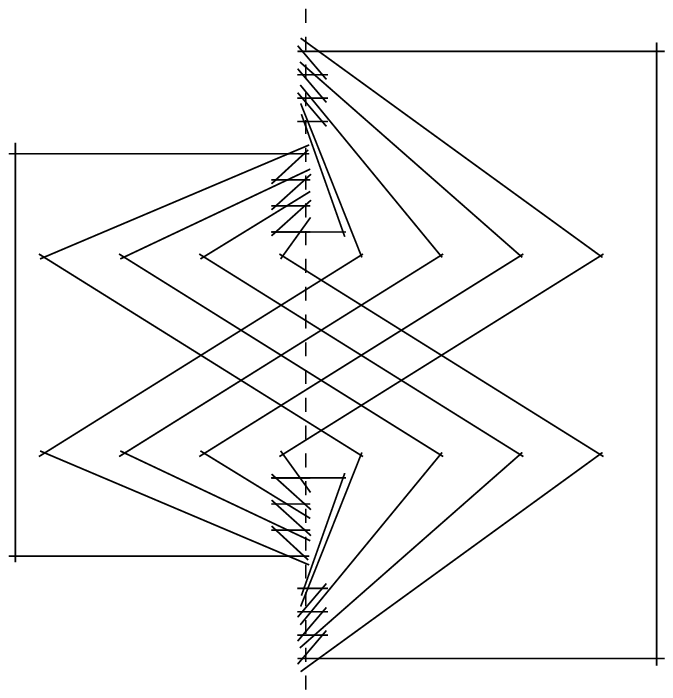}
\end{center}
\caption{An example of the frame and the joining segments added to a small grid arrangement in such a way that all segments cross a common line.}
\label{obr_4_nonflat_frame}
\end{figure}

Now we fix an arbitrary (sufficiently flattened) realization $M'$ of $M$ such that there is a line $q$ crossing all segments from $M'$.

By the same argument as in the previous section, the grid segments form a grid homeomorphic to the grid in Figure~\ref{obr_4_nonflat_grid}. The line $q$ can pass through at most $17$ cells of the coarse grid, since it crosses two of the segments $g_1, g_{10}, h_1, h_{10}$ when entering and leaving the grid, and one other segment $g_i$ or $h_i$ between every two cells in the coarse grid. Each of the $17$ short segments has to lie in the same cell as in Figure~\ref{obr_4_nonflat_grid}. It follows that $q$ passes exactly through these $17$ cells and also in the same order as in Figure~\ref{obr_4_nonflat_grid}. As a consequence we get that the orientation of the segments $g_1, \dots, g_{10}$ and $h_1, \dots, h_{10}$ induced by the grid is consistent with the left-right orientation induced by the line $q$, as in Figure~\ref{obr_4_nonflat_grid}. 

The segment $p_5$ has to lie inside the same $3\times 3$ subgrid of the coarse grid as in Figure~\ref{obr_4_nonflat_grid}. Moreover, since it crosses $q$, it also has to start and end in the same two cells (but the cells it passes through are not uniquely determined). Since both $p_3$ and $p_4$ are connected to the frame between specific pairs of grid segments, one of their endpoints lies outside the grid and the other endpoint lies in the same cell as in Figure~\ref{obr_4_nonflat_grid}. 

We can restrict the position of $p_3$ and $p_4$ even further. Since $p_3$ crosses the short segment $s_1$, it has to pass through the corresponding cell. As a consequence we get that the intersection of $p_3$ with $q$ lies ``below'' the intersection of $p_5$ with $q$, otherwise $p_3$ would cross $p_5$ or cross $h_7$ twice. Similarly, as $p_4$ crosses $s_2$, it has to cross $q$ ``above'' the intersection of $p_5$ with $q$. Also, by the same reason, starting from the endpoint inside of the grid, both $p_3$ and $p_4$ cross $q$ before they cross $p_1$ or $p_2$. Therefore, the sub-arrangement of $p_1, \dots, p_5$ and $q$ in $M'$ is homeomorphic to the arrangement in Figure~\ref{obr_4_nonflat_core} and the proof of Theorem~\ref{veta_crossable_nonflat} is finished.

\section*{Acknowledgments}
The author is grateful to G\'eza T\'oth and Jakub \v{C}ern\'y for helpful and inspiring discussions on this problem.

\end{document}